\newcounter{theoremUnified} % Unified coutner for all theorem environments
\numberwithin{theoremUnified}{section} % Numbering within sections
\numberwithin{theoremUnified}{section} % Equations are also numbered within sections
\newtheoremstyle{plainStyle} % Plain theorem style
{2mm} % Space above
{2mm} % Space below
{} % Body font
{} % Indent amount
{\bfseries} % Theorem head font
{.} % Punctuation after theorem head
{.5em} % Space after theorem head
{} % Theorem head spec (can be left empty, meaning `normal')
\newtheoremstyle{italicStyle} % Italic theorem style
{2mm} % Space above
{2mm} % Space below
{\itshape} % Body font
{} % Indent amount
{\bfseries} % Theorem head font
{.} % Punctuation after theorem head
{.5em} % Space after theorem head
{} % Theorem head spec (can be left empty, meaning `normal')
\newtheorem{notation}{Notation}{\itshape}{\rmfamily}
{\itshape}{\rmfamily}
\theoremstyle{plainStyle}
\newcommand{\Naturals}{\mathbb{N}} % Set of natural numbers
\newcommand{\Mset}[1]{{#1}^{\Naturals}} % Multiset named #1, base set implicit
\newcommand{\Msets}[1]{{#1}^{\oplus}} % Set of multisets on base set #1
\newcommand{\Id}[1]{\text{id}_{#1}} % Identity morphism of object #1
\newcommand{\CategoryC}{\mathcal{C}}
\newcommand{\CategoryD}{\mathcal{D}}
\newcommand{\Fun}[1]{{{#1}^\sharp}} % Generic assignment of semantics for a net
\newcommand{\NetSem}[1]{\left( #1, \Fun{#1}\right)} % Object in Petri-
\newcommand{\Semantics}{\mathcal{S}} % Generic semantics for nets
\newcommand{\Comm}[1]{\mathfrak{C}\kern-.1em\left(#1\right)} % FCSMC associated to a Petri net
\newcommand{\CommB}[1]{\mathfrak{C}_B\kern-.1em\left(#1\right)} % Bound FCSMC associated to a Petri net
\newcommand{\CommM}[1]{\mathfrak{C}_M\kern-.2em\left(#1\right)} % Mana FCSMC associated to a Petri net
\newcommand{\Grothendieck}[1]{\textstyle\int{#1}} % Generic Grothendieck construction
\newcommand{\GrothendieckS}[1]{\Grothendieck{\Fun{#1}}} % Crothendieck construction on Set_*
\newcommand{\harpvecsign}{\scriptscriptstyle\rightharpoonup}
\newcommand{\harpoonvec}[2]{%
	\ifx\displaystyle#1\doalign{$\harpvecsign$}{#1#2}\fi
	\ifx\textstyle#1\doalign{$\harpvecsign$}{#1#2}\fi
	\ifx\scriptstyle#1\doalign{\scalebox{.6}[.9]{$\harpvecsign$}}{#1#2}\fi
	\ifx\scriptscriptstyle#1\doalign{\scalebox{.5}[.8]{$\harpvecsign$}}{#1#2}\fi
}
\newcommand{\doalign}[2]{%
	{\vbox{\offinterlineskip\ialign{\hfil##\hfil\cr#1\cr$#2$\cr}}}%
}
\newcommand{\Set}{\mathbf{Set}} % Category of sets and functions
\newcommand{\Span}{\mathbf{Span}} % Category of sets and spans
\newcommand{\Cat}{\mathbf{Cat}} % Category of categories
\newcommand{\Petri}{\mathbf{Petri}} % Category of Petri nets and their morphisms
\newcommand{\CSMC}{\mathbf{CSMC}} % Category of commutative strict monoidal categories
\newcommand{\FCSMC}{\mathbf{FCSMC}} % Category of free commutative strict monoidal categories
\newcommand{\PetriS}[1]{\mathbf{Petri}^{#1}} % Category of Petri nets with a semantic assignment
\newcommand{\PetriSpan}{\PetriS{\Span}} % Category of Petri nets with semantics in Span
\newcommand{\PetriMana}{\PetriS{\mathcal M}} % Category of mana-nets
\newcommand{\TensorUnit}{I} % Monoidal tensor unit
\newcommand{\Suchthat}[2]{\left\{#1 \: \middle\vert \: #2\right\}} % Set of elements #1 such that condition #2 holds 
\def\backgrnd{black!10}	% Background for Tikz pictures
\tikzstyle{place}=
\tikzstyle{manaplace}=
\tikzstyle{antiplace}=
\tikzstyle{transition}=
\tikzstyle{inarrow}=[->, >=stealth, shorten >=.03cm,line width=1.5]
\tikzstyle{antiarrow}=[<-, red!75,  >=stealth, shorten >=.03cm,line width=1.5]
\tikzset{
	pics/netA/.style args={#1/#2/#3/#4/#5/#6/#7}{code={
					% \fill[yellow!15] (-0.8,-2.3) rectangle (2.3, 2.3);

					\node [place,label=above:$p_1$, tokens={%
								#1
							}] (-pl_1) {};

					\node [transition,label=above:$t$, label=below:#5] (-tr_1) [right = of -pl_1] {};

					\node [place,label=above:$p_2$, tokens={%
								#2
							}] (-pl_2) [right = of -tr_1] {};

					\node [transition,label=left:$v$, label=above:#6] (-tr_2) [below = of -tr_1] {};
					\node [transition,label=below:$u$, label=above:#7] (-tr_3) [below = of -tr_2] {};

					\node [place,label=below:$p_3$, tokens={%
								#3
							}] (-pl_3) [left = of -tr_3] {};

					\node [place,label=below:$p_4$, tokens={%
								#4
							}] (-pl_4) [right = of -tr_3] {};

					\draw[inarrow] (-pl_1) -- (-tr_1);
					\draw[inarrow] (-tr_1) -- (-pl_2);
					\draw[inarrow] (-pl_2) -- (-tr_2);
					\draw[inarrow] (-tr_2) -- (-pl_3);
					\draw[inarrow] (-tr_2) -- (-pl_4);
					\draw[inarrow] (-pl_3) -- (-tr_3);
					\draw[inarrow] (-tr_3) -- (-pl_4);
				}}
}
\tikzset{ % WD-Oriented wiring diagrams
	oriented WD/.style={%everything after equals replaces "oriented WD" in key.
			every to/.style={
					out=0,in=180,draw
				},
			label/.style={
					font=\everymath\expandafter{\the\everymath\scriptstyle},
					inner sep=0pt,
					node distance=2pt and -2pt
				},
			semithick,
			node distance=1 and 1,
			decoration={
					markings, mark=at position \stringdecpos with \stringdec
				},
			ar/.style={
					postaction={decorate}
				},
			execute at begin picture={
					\tikzset{
						x=\bbx, y=\bby,
						every fit/.style={
								inner xsep=\bbx, inner ysep=\bby
							}
					}
				}
		},
	string decoration/.store in=\stringdec,
	string decoration={
			\arrow{stealth};
		},
	string decoration pos/.store in=\stringdecpos,
	string decoration pos=.7,
	bbx/.store in=\bbx,
	bbx = 1.5cm,
	bby/.store in=\bby,
	bby = 1.5ex,
	bb port sep/.store in=\bbportsep,
	bb port sep=1.5,
	bb port length/.store in=\bbportlen,
	bb port length=4pt,
	bb penetrate/.store in=\bbpenetrate,
	bb penetrate=0,
	bb min width/.store in=\bbminwidth,
	bb min width=1cm,
	bb rounded corners/.store in=\bbcorners,
	bb rounded corners=2pt,
	bb small/.style={
			bb port sep=1,
			bb port length=2.5pt,
			bbx=.4cm, bb min width=.4cm,
			bby=.7ex
		},
	bb medium/.style={
			bb port sep=1,
			bb port length=2.5pt,
			bbx=.4cm,
			bb min width=.4cm,
			bby=.9ex
		},
	bb/.code 2 args={%When you see this key, run the code below:
			\pgfmathsetlengthmacro{\bbheight}{\bbportsep * (max(#1,#2)+1) * \bby}
			\pgfkeysalso{
				draw,
				minimum height=\bbheight,
				minimum width=\bbminwidth,
				outer sep=0pt,
				rounded corners=\bbcorners,
				thick,
				prefix after command={
						\pgfextra{\let\fixname\tikzlastnode}
					},
				append after command={
						\pgfextra{
							\draw
							\ifnum #1=0
								{}
							\else
								foreach \i in {1,...,#1} {
								($(\fixname.north west)!{\i/(#1+1)}!(\fixname.south west)$) +(-
								\bbportlen,0)
								coordinate (\fixname_in\i) -- +(\bbpenetrate,0) coordinate (\fixname_in\i')
								}
							\fi
							%Define the endpoints of tickmarks
							\ifnum
								#2=0{}
							\else
								foreach \i in {1,...,#2} {
								($(\fixname.north east)!{\i/(#2+1)}!(\fixname.south east)$) +(-
								\bbpenetrate,0)
								coordinate (\fixname_out\i') -- +(\bbportlen,0) coordinate (\fixname_out\i)
								}
							\fi;
						}
					}
			}
		},
	bb name/.style={
			append after command={
					\pgfextra{
						\node[anchor=north] at (\fixname.north) {#1}
						;}
				}
		}
}
\def\lrc{
	\begin{tikzpicture}[scale=.33]
		\draw[-] (0,0) -| (1,1);
	\end{tikzpicture}
}
\newcommand{\pb}{\arrow[dr, phantom, "\lrc", very near start]}
\tikzset{commutative diagrams/.cd,arrow style=tikz,diagrams={>=stealth'}}
\begin{document}
\title{Nets with Mana: A Framework for Chemical Reaction Modelling}
\author{
	Fabrizio Genovese\inst{1}\,\textsuperscript{\faEnvelopeO}\orcidID{0000-0001-7792-1375}\\
	Fosco Loregian\inst{2}\,\textsuperscript{\faEnvelopeO}\orcidID{0000-0003-3052-465X}\\
	Daniele Palombi\inst{3}\,\textsuperscript{\faEnvelopeO}\orcidID{0000-0002-8107-5439}
}
\authorrunning{F. Genovese \and F. Loregian \and D. Palombi}
\institute{%
	University of Pisa\\
	\email{fabrizio.romano.genovese@gmail.com}
	\and
	Tallinn University of Technology\\
	\email{fosco.loregian@gmail.com}
	\and
	Sapienza University of Rome\\
	\email{danielepalombi@protonmail.com}
}
\maketitle
\begin{abstract}
	We use categorical methods to define a new flavor of Petri nets
	where transitions can only fire a limited number of times, specified by
	a quantity that we call mana. We do so with chemistry in mind, looking at
	ways of modelling the behavior of chemical reactions
	that depend on enzymes to work. We prove that such nets can be either obtained as a result
	of a comonadic construction, or by enriching them with extra information
	encoded into a functor. We then use a well-established categorical result to
	prove that the two constructions are equivalent, and generalize them to
	the case where the firing of some transitions can "regenerate" the mana of others.
	This allows us to represent the action of catalysts and also of biochemical processes where
	the byproducts of some chemical reaction are exactly the enzymes that another reaction
	needs to work.
\end{abstract}
\paragraph*{\bf Acknowledgements}
The first author was supported by the project MIUR PRIN 2017FTXR7S “IT-MaTTerS” and by the \href{https://gitcoin.co/grants/1086/independent-ethvestigator-program}{Independent Ethvestigator Program}.

The second author was supported by the ESF funded Estonian IT Academy research measure (project 2014-2020.4.05.19-0001).

\bigskip\noindent
A video presentation of this paper can be found on Youtube at \href{https://www.youtube.com/watch?v=9sxVBJs1okE}{9sxVBJs1okE}.

\section{Introduction}\label{sec: introduction}
Albeit they have found great use outside their original domain,
Petri nets were invented to describe chemical reactions~\cite{Petri2008}.
The interpretation is as simple as it can get: places of the net represent
types of compounds (be it atoms or molecules); tokens represent the amount of
each combination we have available; transitions represent reactions transforming
compounds.
\begin{linenomath*}
	\begin{equation}
		\scalebox{0.7}{
	\begin{tikzpicture}[baseline=(current bounding box.center)]
		\begin{scope}[xshift=0]
			\begin{pgfonlayer}{nodelayer}
				\node [place,tokens=2, label=right:ATP] (1a) at (-1.5,1) {};
				\node [place,tokens=1, label=right:$\text{H}_2\text{O}$] (1b) at (-1.5,-1) {};
				\node [place,tokens=0, label=left:ADP] (3a) at (1.5,1) {};
				\node [place,tokens=0, label=left:$\text{P}_i$] (3b) at (1.5,-1) {};
				\node[transition] (2a) at (0,0) {};
			\end{pgfonlayer}
			\begin{pgfonlayer}{edgelayer}
				\draw[style=inarrow, thick] (1a) to (2a);
				\draw[style=inarrow, thick] (1b) to (2a);
				\draw[style=inarrow, thick] (2a) to (3a);
				\draw[style=inarrow, thick] (2a) to (3b);
			\end{pgfonlayer}
		\end{scope}

		\begin{scope}[xshift=200]
			\begin{pgfonlayer}{nodelayer}
				\node [place,tokens=1, label=right:ATP] (1a) at (-1.5,1) {};
				\node [place,tokens=0, label=right:$\text{H}_2\text{O}$] (1b) at (-1.5,-1) {};
				\node [place,tokens=1, label=left:ADP] (3a) at (1.5,1) {};
				\node [place,tokens=1, label=left:$\text{P}_i$] (3b) at (1.5,-1) {};
				\node[transition] (2a) at (0,0) {};
			\end{pgfonlayer}
			\begin{pgfonlayer}{edgelayer}
				\draw[style=inarrow, thick] (1a) to (2a);
				\draw[style=inarrow, thick] (1b) to (2a);
				\draw[style=inarrow, thick] (2a) to (3a);
				\draw[style=inarrow, thick] (2a) to (3b);
			\end{pgfonlayer}
		\end{scope}

		% \begin{scope}[xshift=270]
		%   \begin{pgfonlayer}{nodelayer}
		%     \node [place,tokens=0] (1a) at (-1.5,1) {};
		%     \node [place,tokens=2] (1b) at (-1.5,-1) {};      			
		%     \node [place,tokens=1] (3a) at (1.5,1) {};
		%     \node [place,tokens=1] (3b) at (1.5,-1) {};
		%     \node[transition] (2a) at (0,0) {};
		%   \end{pgfonlayer}
		%   \begin{pgfonlayer}{edgelayer}
		%       \draw[style=inarrow, thick] (1a) to (2a);
		%       \draw[style=inarrow, thick] (1b) to (2a);
		%       \draw[style=inarrow, thick, bend right] (2a) to (3a);
		%       \draw[style=inarrow, thick, bend left] (2a) to (3b);

		%       \draw[style=inarrow, thick, bend left] (2a) to (3a);
		%       \draw[style=inarrow, thick, bend left] (3a) to (2b);
		%   \end{pgfonlayer}
		% \end{scope}

		\draw[style=inarrow, thick] (3,0) -- (4.25,0);
		%    \draw[style=inarrow, thick] (7.05,0) -- (7.55,0);

	\end{tikzpicture}
}
	\end{equation}
\end{linenomath*}
Still, things are not so easy in real-world chemistry: reactions often need ``context'' to happen,
be it a given temperature, energy, presence of enzymes and catalysts. This is particularly true
in biochemical processes, where enzymes of all
sorts mediate rather complicated reactions. Importantly, these enzymes tend to degrade over time, resulting in reactions that do not keep happening forever~\cite{Latelier2006}. This is one of the (many) reasons why organisms wither and die,
but it is not captured by the picture above, where the transition can fire every time it is enabled.

Borrowing the terminology from the popular Turing machine \emph{Magic: The gathering}~\cite{Wikipedia2020,Churchill2019}
we propose a possible solution to this problem by endowing transitions in a net with \emph{mana}~\cite{Wikipedia2020a},
representing the ``viability'' of reactions: once a reaction is out of mana, it cannot fire anymore.
\begin{linenomath*}
	\begin{equation*}
		\scalebox{0.7}{
	\begin{tikzpicture}
		\begin{scope}[xshift=0]
			\begin{pgfonlayer}{nodelayer}
				\node [place,tokens=0, label=left:compound A] (1a) at (-1.5,1) {};
				\node [place,tokens=0, label=left:compound B] (1b) at (-1.5,-1) {};
				\node [place,tokens=0, label=right:compound C] (3a) at (1.5,0) {};

				\node [manaplace,tokens=0, label=above:mana] (2a) at (0,1) {};
				\node[transition] (2b) at (0,0) {};
			\end{pgfonlayer}
			\begin{pgfonlayer}{edgelayer}
				\draw[style=inarrow, thick] (1a) to (2b);
				\draw[style=inarrow, thick] (1b) to (2b);
				\draw[style=inarrow, thick] (2b) to (3a);
				\draw[style=inarrow, thick] (2a) to (2b);
			\end{pgfonlayer}
		\end{scope}

	\end{tikzpicture}
}
	\end{equation*}
\end{linenomath*}
Now, we could just represent mana by adding another place for each transition in a net.
Indeed, this is the idea we will start with. Still, being accustomed to the \emph{yoga} of type-theoretic reasoning,
we are also aware that throwing everything in the same bucket is rarely a good idea: albeit mana can be a chemical compound, it is more realistic to consider it as conceptually separated from the reactions it catalyzes.

\medskip
Resorting to categorical methods, we show how we can axiomatize the idea of mana in a better
way. We do so by relaxing the definitions in the categorical approach to coloured nets already developed in~\cite{Genovese2020},
defining a functorial semantics representing the equipment of a net with mana.
Then, we will prove how categorical techniques allow us to internalize such
a semantics, exactly obtaining what we represented in the picture above.

Finally, we will show how the categorical semantics naturally leads to a further generalization,
where transitions not only need mana to function but also provide byproducts that can be used as
mana for other transitions. This allows us to represent \emph{catalysts}\footnote{
	An unrelated categorical approach to nets with catalysts can be found in~\cite{Baez2019a}.
} (i.e. cards `adding $\infty$ to the mana pool', or more precisely mana that
does not deteriorate over time) and in general nets apt to describe \emph{two-layered} chemical processes,
the first layer being the usual one represented by Petri nets and the second layer being the one of
enzymes and catalysts being consumed and exchanged by different reactions.

\section{Nets and their executions}\label{sec: nets and their executions}
Before presenting the construction itself, it is worth recapping the main points about
categorical semantics for Petri nets. The definition of net commonly used in the
categorical line of work is the following:
\begin{notation}
	Let $S$ be a set; denote with $\Msets{S}$ the set of \emph{finite} multisets over $S$.
	Multiset sum will be denoted with $\oplus$, multiplication with $\odot$ and
	difference (only partially defined) with
	$\ominus$. $\Msets{S}$ with $\oplus$ and the empty multiset is isomorphic to the free commutative monoid on $S$.
\end{notation}
\begin{definition}[Petri net]\label{def: Petri net}
	We define a
	\emph{Petri net} as a couple functions $T \xrightarrow{s,t} \Msets{S}$ for
	some sets $T$ and $S$, called the set of places and transitions of the net, respectively.

	A \emph{morphism of nets} is a couple of functions $f: T \to T'$ and $g: S \to S'$ such that
	the following square commutes, with $\Msets{g}: \Msets{S} \to \Msets{S'}$ the obvious
	lifting of $g$ to multisets:
	\begin{linenomath*}
		\begin{equation*}
			\begin{tikzcd}
				{\Msets{S}} & {T} & {\Msets{S}} \\
				{\Msets{S'}} & {T'} & {\Msets{S'}}
				\arrow["{s}"', from=1-2, to=1-1]
				\arrow["{s'}", from=2-2, to=2-1]
				\arrow["{t'}"', from=2-2, to=2-3]
				\arrow["{t}", from=1-2, to=1-3]
				\arrow["{\Msets{g}}"', from=1-1, to=2-1]
				\arrow["{\Msets{g}}", from=1-3, to=2-3]
				\arrow["{f}" description, from=1-2, to=2-2]
			\end{tikzcd}
		\end{equation*}
	\end{linenomath*}
	Petri nets and their morphisms form a category, denoted $\Petri$. The reader can find additional details in~\cite{Meseguer1990}.
\end{definition}
\begin{definition}[Markings and firings]\label{def: Petri net firing}
	A \emph{marking} for a net  $T \xrightarrow{s,t} \Msets{S}$ is an element of $\Msets{S}$,
	representing a distribution of tokens in the net places. A transition $u$ is \emph{enabled} in a marking $M$ if
	$M \ominus s(u)$ is defined. An enabled transition can \emph{fire}, moving tokens in the net.
	Firing is considered an atomic event, and the marking resulting from firing $u$ in $M$ is $M \ominus s(u) \oplus t(u)$.
\end{definition}
Category theory provides a slick definition to represent all the possible
executions of a net -- all the ways one can fire transitions starting from a given marking -- as morphisms in a category. There are various ways to do this~\cite{Meseguer1990,Master2020,Sassone1995,Genovese2019c,Genovese2019b, Baez2021},
depending if we want to consider
tokens as indistinguishable (common-token philosophy) or not (individual-token philosophy).
In this work, we focus on chemical reactions. Since we consider atoms and molecules of the same kind to be physically indistinguishable, we will adopt the common-token perspective. In this case, the category of executions of a net is a \emph{commutative monoidal category} -- a
monoidal category whose monoid of objects is commutative.
\begin{center}
	\begin{adjustbox}{max width=\textwidth}
		% \parbox{\linewidth}{
		\begin{tikzpicture}
			\pgfmathsetmacro\bS{5}
			\pgfmathsetmacro\hkX{(\bS/3.5)}
			\pgfmathsetmacro\kY{-1.5}
			\pgfmathsetmacro\hkY{\kY*0.5}
			\draw pic (m0) at (0,0) {netA={{1}/{1}/{2}/{0}/{}/{}/{}}};
			\draw pic (m1) at (\bS,0) {netA={{0}/{2}/{2}/{0}/{}/{}/{}}};
			\draw pic (m2) at ({2 * \bS},0) {netA={{0}/{1}/{3}/{1}/{}/{}/{}}};
			\draw pic (m3) at ({3 * \bS},0) {netA={{0}/{1}/{2}/{2}/{}/{}/{}}};
			\begin{scope}[very thin]
				\foreach \j in {1,...,3} {
						\pgfmathsetmacro \k { \j * \bS - 1 };
						\draw[gray,dashed] (\k,-4) -- (\k,-8.25);
						\draw[gray] (\k,1) -- (\k,-4);
					}
			\end{scope}
			\begin{scope}[shift={(0,-4)}, oriented WD, bbx = 1cm, bby =.4cm, bb min width=1cm, bb port sep=1.5]
				\draw node [fill=\backgrnd,bb={1}{1}] (Tau) at (\bS -1,-1) {$t$};
				\draw node [fill=\backgrnd,bb={1}{2}, ] (Mu)  at ({2 * \bS - 1},-1) {$v$};
				\draw node [fill=\backgrnd,bb={1}{1}] (Nu)  at ({3 * \bS - 1},{2 * \kY}) {$u$};
				\draw (-1,-1) --     node[above] {$p_1$}       (0,-1)
				--                  node[above] {}          (Tau_in1);
				\draw (-1,-2) -- node[above] {$p_2$} (0,-2) -- (\bS-1, -2);
				\draw (-1,-3) -- node[above] {$p_3$} (0,-3) -- (\bS-1, -3);
				\draw (-1,-4) -- node[above] {$p_3$} (0,-4) -- (\bS-1, -4);
				\draw (Tau_out1) -- node[above] {$p_2$}    (Mu_in1);
				\draw (\bS-1,-2) -- (2*\bS-1, -2);
				\draw (\bS-1,-3) -- (2*\bS-1, -3);
				\draw (\bS-1,-4) -- (2*\bS-1, -4);
				\draw (Mu_out1) --  (3*\bS-1, -0.725);
				\draw (Mu_out2) --  (3*\bS-1, -1.325);
				\draw (2*\bS-1,-2) -- (3*\bS-1, -2);
				\draw (2*\bS-1,-3) -- (Nu_in1);
				\draw (2*\bS-1,-4) -- (3*\bS-1, -4);
				\draw (3*\bS-1,-0.725) to (4*\bS-2, -0.725) -- node[above] {$p_3$} (4*\bS-1, -0.725);
				\draw (3*\bS-1,-1.325) -- (3*\bS,-1.325) to (4*\bS-2, -1.325) -- node[above] {$p_4$} (4*\bS-1, -1.325);
				\draw (3*\bS-1,-2) to (4*\bS-2, -2) -- node[above] {$p_2$} (4*\bS-1, -2);
				\draw (Nu_out1) to (4*\bS-2, -3) -- node[above] {$p_4$} (4*\bS-1, -3);
				\draw (3*\bS-1,-4) to (4*\bS-2, -4) -- node[above] {$p_3$} (4*\bS-1, -4);
			\end{scope}
			% \begin{pgfonlayer}{background}
			% 	\filldraw [line width=4mm,join=round,\backgrnd]
			% 	((-1,1.5)  rectangle (19,-8.5);
			% \end{pgfonlayer}
		\end{tikzpicture}
		% \]}
	\end{adjustbox}
\end{center}
\begin{definition}[Category of executions -- common-token philosophy]\label{def: executions common token philosophy}
	Let $N: T \xrightarrow{s,t} \Msets{S}$ be a Petri net.
	We can generate a \emph{free commutative strict monoidal category (FCSMC)}, $\Comm{N}$, as follows:
	\begin{itemize}
		\item The monoid of objects is $\Msets{S}$. Monoidal product of objects $A,B$, denoted with $A \oplus B$, is given by the multiset sum;
		\item Morphisms are generated by $T$: each $u \in T$ corresponds to a morphism generator
		      $(u,su,tu)$, pictorially represented as an arrow $su \xrightarrow{u} tu$;
		      morphisms are obtained by considering all the formal (monoidal) compositions of generators and identities.
	\end{itemize}
	The readers can find a detailed description of this construction in~\cite{Master2020}.
\end{definition}
As shown in the picture above, objects in $\Comm{N}$ represent markings of a net: $A\oplus A \oplus B$ means ``two tokens in $A$ and one token in $B$''. Morphisms represent executions of a net, mapping markings to markings.
A marking is reachable from another one if and only if there is a morphism between them.

The correspondence between Petri nets and their executions is categorically well-behaved,
defining an adjunction between the category $\Petri$ and the category $\CSMC$ of commutative strict
monoidal categories, with~\cref{def: executions common token philosophy} building the left-adjoint $\Petri \to \CSMC$.
The readers can find additional details in~\cite{Master2020}.

\section{The internal mana construction}\label{sec: the internal mana construction}
The idea presented in the introduction can na\"ively be formalised by just attaching
an extra input place to any transition in a net, representing the mana a given transition
can consume. We call the following construction \emph{internal} because it builds a category
directly, in contrast with an external equivalent construction given in~\cref{def: external mana construction}.
\begin{definition}[Internal mana construction]\label{def: internal mana construction}
	Let $N: T \xrightarrow{s,t} \Msets{S}$ be a Petri net,
	and consider $\Comm{N}$, its corresponding FCSMC.
	The \emph{internal mana construction of $N$}
	is given by the FCSMC $\CommM{N}$ generated as follows:
	\begin{itemize}
		\item The generating objects of $\CommM{N}$ are the coproduct of the generating objects of $\Comm{N}$ and $T$;
		\item For each generating morphism
		      \begin{linenomath*}
			      \begin{equation*}
				      A_1 \oplus \dots \oplus A_n \xrightarrow{u} B_1 \oplus \dots \oplus B_m
			      \end{equation*}
		      \end{linenomath*}
		      in $\Comm{N}$, we introduce a morphism generator in $\CommM{N}$:
		      \begin{linenomath*}
			      \begin{equation*}
				      A_1 \oplus \dots \oplus A_n \oplus u \xrightarrow{u} B_1 \oplus \dots \oplus B_m
			      \end{equation*}
		      \end{linenomath*}
		      Notice that the writing above makes sense because $u$ is an element of $T$.
	\end{itemize}
\end{definition}
Because of the adjunction between $\Petri$ and $\CSMC$,
we can think every FCSMC as being presented by a Petri net.
The category of~\cref{def: internal mana construction}
is presented precisely by the net obtained from $N$ as
we did in~\cref{sec: introduction}: the additional generating objects of
$\CommM{N}$ represent the places containing the mana associated with each transition.
\begin{example}\label{ex: internal mana construction}
	Performing the construction in~\cref{def: internal mana construction}
	on the category of executions of the net on the left gives the category
	of executions of the net on the right, as we expect:
	\begin{linenomath*}
		\begin{equation*}
			\scalebox{0.7}{
	\begin{tikzpicture}
		\begin{scope}[xshift=0]
			\begin{pgfonlayer}{nodelayer}
				\node [place,tokens=0, label=left:compound A] (1a) at (-1.5,1) {};
				\node [place,tokens=0, label=left:compound B] (1b) at (-1.5,-1) {};
				\node [place,tokens=0, label=right:compound C] (3a) at (1.5,0) {};

				\node[transition] (2b) at (0,0) {};
			\end{pgfonlayer}
			\begin{pgfonlayer}{edgelayer}
				\draw[style=inarrow, thick] (1a) to (2b);
				\draw[style=inarrow, thick] (1b) to (2b);
				\draw[style=inarrow, thick] (2b) to (3a);
			\end{pgfonlayer}
		\end{scope}

	\end{tikzpicture}
}
			\quad
			\scalebox{0.7}{
	\begin{tikzpicture}
		\begin{scope}[xshift=0]
			\begin{pgfonlayer}{nodelayer}
				\node [place,tokens=0, label=left:compound A] (1a) at (-1.5,1) {};
				\node [place,tokens=0, label=left:compound B] (1b) at (-1.5,-1) {};
				\node [place,tokens=0, label=right:compound C] (3a) at (1.5,0) {};

				\node [manaplace,tokens=0, label=above:mana] (2a) at (0,1) {};
				\node[transition] (2b) at (0,0) {};
			\end{pgfonlayer}
			\begin{pgfonlayer}{edgelayer}
				\draw[style=inarrow, thick] (1a) to (2b);
				\draw[style=inarrow, thick] (1b) to (2b);
				\draw[style=inarrow, thick] (2b) to (3a);
				\draw[style=inarrow, thick] (2a) to (2b);
			\end{pgfonlayer}
		\end{scope}

	\end{tikzpicture}
}
		\end{equation*}
	\end{linenomath*}
\end{example}
\begin{proposition}\label{prop: comonad}
	The assignment $\Comm{N} \mapsto \CommM{N}$ defines a comonad\footnote{Given a category $\mathcal C$, a \emph{comonad} on $\mathcal C$ is an endofunctor $S$ endowed with two natural transformations $\delta : S \Rightarrow S\circ S$ and $\epsilon : S \Rightarrow 1_{\mathcal C}$ such that $\delta$ is coassociative and has $\epsilon$ as a counit. More succinctly, a comonad is a comonoid in the monoidal category $[\mathcal C,\mathcal C]$ of endofunctors of $\mathcal C$. See \cite[§5.3]{1912.10642} for the definition and a variety of examples.} in
	the category of FCMSCs and strict monoidal functors between them, $\FCSMC$.
\end{proposition}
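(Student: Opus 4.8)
The plan is to exhibit the three pieces of comonad structure — the endofunctor $\CommM{-}$, a counit $\epsilon\colon\CommM{-}\Rightarrow\mathrm{Id}$, and a comultiplication $\delta\colon\CommM{-}\Rightarrow\CommM{\CommM{-}}$ — entirely on generators, and then to check the counit and coassociativity laws by a generator chase. Two general facts make the bookkeeping manageable. First, by \cref{def: executions common token philosophy} and the adjunction between $\Petri$ and $\CSMC$, every object of $\FCSMC$ may be presented as $\Comm N$ for some net $N\colon T\xrightarrow{s,t}\Msets S$, and a strict monoidal functor out of a free CSMC is uniquely determined by its action on generating objects and generating morphisms. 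Second, $\CommM N$ is the category of executions of the net with places $S\sqcup T$, transitions $T$, and $u\mapsto(su\oplus u,\ tu)$; iterating, $\CommM{\CommM N}$ is presented by the net with places $S\sqcup T\sqcup T$, that is, two copies $T_{[1]},T_{[2]}$ of the transitions with each $u$ consuming one token from each copy, and $\CommM{\CommM{\CommM N}}$ by the analogous net with three copies.

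I would first upgrade $\CommM{-}$ to an endofunctor of $\FCSMC$. On objects this is \cref{def: internal mana construction}. Given a strict monoidal functor $F\colon\Comm N\to\Comm{N'}$, the delicate point is the value of $\CommM F$ on the new generating objects, i.e.\ on the mana-places $u\in T$: sending them to the monoidal unit fails, because the lift of $Fu$ to $\CommM{N'}$ consumes mana and so does not have $\CommM F(su)$ as source. The remedy is the strict monoidal ``generator-counting'' functor $w_{N'}$ from $\Comm{N'}$ to the free CSMC on the net with transition set $T'$ and no places (well defined by freeness, since it must send every object to the unit and each generator to the corresponding loop). One then sets $\CommM F$ to agree with $F$ on the $S$-generating objects, to send the mana-place $u$ to the multiset $w_{N'}(Fu)\in\Msets{T'}$ of generators occurring in $Fu$, and to send the generator $u\colon su\oplus u\to tu$ to the evident lift of $Fu$ to $\CommM{N'}$ — replace each generator in a decomposition of $Fu$ by its mana-consuming counterpart, the choice of decomposition being irrelevant precisely because $w_{N'}$ is well defined. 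Functoriality $\CommM{(G\circ F)}=\CommM G\circ\CommM F$ is then a short calculation from functoriality of $G$ and of generator-counting.

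The counit $\epsilon_N\colon\CommM N\to\Comm N$ is the strict monoidal functor that is the identity on the $S$-generating objects, sends every mana-place $u\in T$ to the monoidal unit, and sends the generator $u\colon su\oplus u\to tu$ of $\CommM N$ to the generator $u\colon su\to tu$ of $\Comm N$ — on presenting nets, it ``forgets the mana places''. Dually, $\delta_N\colon\CommM N\to\CommM{\CommM N}$ is the identity on the $S$-generating objects, sends each mana-place $u$ to $u_{[1]}\oplus u_{[2]}$, and sends the generator $u\colon su\oplus u\to tu$ to the generator $u\colon su\oplus u_{[1]}\oplus u_{[2]}\to tu$ of $\CommM{\CommM N}$. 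Conceptually $\epsilon$ and $\delta$ are nothing but the terminal map and the diagonal of the canonical $\oplus$-comonoid structure borne by the free commutative monoid on $T$, transported along the generators, which is why the comonad axioms will hold. Naturality of $\epsilon$ and $\delta$ in $\Comm N$ is a routine check on generators.

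Finally I would verify the three identities by evaluating both sides on the $S$-generating objects (on which everything in sight is the identity), on the mana-places, and on the generating morphisms $u$. For the counit laws, $\epsilon_{\CommM N}$ deletes the copy $T_{[2]}$ while $\CommM{\epsilon_N}$ deletes the copy $T_{[1]}$ (because $\epsilon_N$ kills precisely the mana-places of $\CommM N$, which become the $T_{[1]}$-generating objects of $\CommM{\CommM N}$) and renames $T_{[2]}$ to $T_{[1]}$; in either case a mana-place goes $u\mapsto u_{[1]}\oplus u_{[2]}\mapsto u_{[1]}$ and generators are carried to generators, yielding $\Id{\CommM N}$. For coassociativity, both composites $\CommM N\to\CommM{\CommM N}\to\CommM{\CommM{\CommM N}}$ send a mana-place $u$ to $u_{[1]}\oplus u_{[2]}\oplus u_{[3]}$ in the three-copies net and preserve generators, so they agree. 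I expect the genuine obstacle to lie in the second paragraph: pinning down $\CommM F$ on the mana-places so that $\CommM{-}$ is honestly functorial on all strict monoidal functors — not merely on those induced by morphisms of nets, which matters because $\epsilon$ itself is not net-induced — after which the structure maps and the comonad laws are essentially forced by the $\oplus$-comonoid structure on $\Msets T$ and only mechanical verification remains.
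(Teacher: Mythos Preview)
Your proposal is correct and follows essentially the same approach as the paper: the endofunctor is defined by sending a mana-place $u$ to the multiset of generators occurring in $Fu$ (what the paper writes as $\Mset{f}$ and you package as the generator-counting functor $w_{N'}$), the counit kills mana-places, and the comultiplication sends $u$ to $u_{[1]}\oplus u_{[2]}$ (the paper writes this simply as $u\oplus u$, relying on the two copies of $T$ in the generating objects of $\CommM{\CommM N}$). Your treatment is more careful about labelling the copies of $T$ and about why functoriality must be established for arbitrary strict monoidal functors rather than just net-induced ones, but the underlying construction and verification are the same.
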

\begin{proof}
	First of all, we have to prove that the procedure is functorial. For any strict monoidal functor
	$F : \Comm{N} \to \Comm{M}$ we define the action on morphisms $\CommM{F}: \CommM{N} \to \CommM{M}$
	as the following monoidal functor:
	\begin{itemize}
		\item $\CommM{F}$ agrees with $F$ on generating objects coming from $\Comm{N}$. If $u$ is a generating
		      morphism of $\Comm{N}$ and it is $Fu = f$, then $\CommM{F}u = \Mset{f}$, with $\Mset{f}$ being the multiset\footnote{This makes sense since $\Comm{M}$ is free, hence decomposition of morphisms in terms of (monoidal) compositions
			      of generators and identities is unique modulo the axioms of monoidal categories, which do not introduce nor remove generating objects.}
		      counting how many times each generating morphism of $\Comm{M}$ is used in $f$.
		\item $\CommM{F}$ agrees with $F$ on generating morphisms.
	\end{itemize}
	Identities and compositions are clearly respected, making $\CommM{\_}$ an endofunctor
	in $\FCSMC$.
	As a counit, on each component $N$ we define the strict monoidal functor
	$\epsilon_N: \CommM{N} \to \Comm{N}$ sending:
	\begin{itemize}
		\item Generating objects coming from $\Comm{N}$ to themselves, and
		      every other generating object to the monoidal unit.
		\item Generating morphisms to themselves.
	\end{itemize}
	The procedure is natural in the choice of $N$, making $\epsilon$
	into a natural transformation $\CommM{\_} \to \Id{\FCSMC}$.

	As for the comultiplication, on each component $N$ we define the strict monoidal functor
	$\delta_N: \CommM{N} \to \CommM{\CommM{N}}$ sending:
	\begin{itemize}
		\item Generating objects coming from $\Comm{N}$ to themselves,
		      every other generating object $u$ is sent to $u \oplus u$.
		\item Generating morphisms are again sent to themselves.
	\end{itemize}
	The naturality of $\delta$ and the comonadicity conditions are a straightforward check.
\end{proof}

\section{The external mana construction}\label{sec: the external mana construction}
As we stressed in~\cref{sec: introduction}, the construction
as in~\cref{def: internal mana construction} has the disadvantage
of throwing everything in the same bucket: in performing it, we do not keep any more
a clear distinction between the different layers of our chemical reaction networks, given
by mana and compounds.

In the spirit of~\cite{Genovese2020}, we now recast the mana construction \emph{externally},
as Petri nets with a \emph{semantics} attached to them. A
semantics for a Petri net is a functor from its category of executions
to some other monoidal category $\Semantics$.

A huge conceptual difference is that in~\cite{Genovese2020} this functor was required to be strict monoidal.
This point of view backed up the interpretation that a semantics ``attaches extra information to tokens'',
to be used by the transitions somehow.
In here, we require this functor to be \emph{lax-monoidal}:\footnote{A \emph{lax monoidal} functor between two monoidal categories $(\mathcal C, \boxtimes J), (\mathcal D,\otimes, I)$ is a functor $F : \mathcal C \to \mathcal D$ endowed with maps $m : FA\otimes FB \to F(A\boxtimes B)$ and $u : I \to FJ$ satisfying suitable coherence conditions; see \cite[Def. 3.1]{aguiar}. If $m,u$ are isomorphisms in $\mathcal D$, $F$ is called \emph{strong monoidal}. If just $u$ is an isomorphism, $F$ is called \emph{normal monoidal}.} lax-monoidality amounts to saying
that we can attach \emph{non-local} information to tokens: tokens may ``know'' something
about the overall state of the net and the laxator represents the process of ``tokens joining knowledge''.

In terms of mana construction, we want to endow each token
with a local ``knowledge'' of how much mana each transition
has available. Laxating amounts to consider ensembles of
tokens together -- as entangled, if you wish --
where their knowledge is merged.
\begin{example}
	\begin{linenomath*}
		\begin{equation*}
			\scalebox{0.7}{
	\begin{tikzpicture}
		\definecolor{custom}{rgb}{0.0, 0.42, 0.24}
		\begin{scope}[xshift=0]
			\begin{pgfonlayer}{nodelayer}
				\node [place,tokens=1] (1a) at (-1.5,1) {};
				\node [place,tokens=1] (1b) at (-1.5,0) {};
				\node [place,tokens=0] (3a) at (1.5,0) {};

				\node[transition, label=above:$u$] (2a) at (0,0) {};
				\node[transition, label=right:$v$] (4a) at (1.5,1) {};

				\draw[very thick, custom] (-1.5,0) circle (0.2);
				\draw[very thick, custom] (-1.7,0) -- (-2,0);
				\draw[very thick, custom] (-1.5,1) circle (0.2);
				\draw[very thick, custom] (-1.7,1) -- (-2,1);

				\draw[very thick, rounded corners, custom] (-4,1.4) rectangle (-2,0.6);
				\draw[very thick, rounded corners, custom] (-4,0.4) rectangle (-2,-0.4);

				\node (lab1) at (-3,1.2) {mana $u$: 3};
				\node (lab2) at (-3,0.8) {mana $v$: 0};

				\node (lab3) at (-3,0.2) {mana $u$: 1};
				\node (lab4) at (-3,-0.2) {mana $v$: 8};

			\end{pgfonlayer}
			\begin{pgfonlayer}{edgelayer}
				\draw[style=inarrow, thick] (1a) to (4a);
				\draw[style=inarrow, thick] (1b) to (2a);
				\draw[style=inarrow, thick] (2a) to (3a);
				\draw[style=inarrow, thick] (3a) to (4a);
			\end{pgfonlayer}
		\end{scope}

		\begin{scope}[xshift=260]
			\begin{pgfonlayer}{nodelayer}
				\node [place,tokens=1] (1a) at (-1.5,1) {};
				\node [place,tokens=1] (1b) at (-1.5,0) {};
				\node [place,tokens=0] (3a) at (1.5,0) {};

				\node[transition, label=above:$u$] (2a) at (0,0) {};
				\node[transition, label=right:$v$] (4a) at (1.5,1) {};

				\draw[very thick, custom] (-1.5,0) circle (0.2);
				\draw[very thick, custom] (-1.7,0) -- (-2,0);
				\draw[very thick, custom] (-1.5,1) circle (0.2);
				\draw[very thick, custom] (-1.7,1) -- (-2,1);

				\draw[very thick, rounded corners, custom] (-4,1.4) rectangle (-2,-0.4);

				\node (lab1) at (-3,0.7) {mana $u$: 4};
				\node (lab2) at (-3,0.3) {mana $v$: 8};

			\end{pgfonlayer}
			\begin{pgfonlayer}{edgelayer}
				\draw[style=inarrow, thick] (1a) to (4a);
				\draw[style=inarrow, thick] (1b) to (2a);
				\draw[style=inarrow, thick] (2a) to (3a);
				\draw[style=inarrow, thick] (3a) to (4a);
			\end{pgfonlayer}
		\end{scope}

		\draw[inarrow, thick] (2.5,0.5) --node[above, midway] {Laxator} (4.5,0.5);
	\end{tikzpicture}
}
		\end{equation*}
	\end{linenomath*}
	If token $a$ knows
	that transition $u$ has $3$ mana left, and token $b$
	knows that transitions $u$ and $v$ have $1$ and $8$ mana
	left, respectively, then tokens $a$ and $b$, considered
	together, know that transitions $u$ and $v$ have $3+1 = 4$
	and $0+8 = 8$ mana left, respectively.
\end{example}
\begin{definition}[Non-local semantics -- common-token philosophy]\label{def: non-local semantics common-token philosphy}
	Let $N$ be a Petri net and let $\Semantics$ be a monoidal category.
	A \emph{Petri net with a non-local commutative semantics} is a couple $\NetSem{N}$, with
	$\Fun{N}$ a lax-monoidal functor $\Comm{N} \to \Semantics$.
	A morphism $\NetSem{N} \to \NetSem{M}$ of Petri nets
	with commutative semantics is a strict monoidal functor $\Comm{N} \xrightarrow{F} \Comm{M}$.
	% together with a natural transformation $\Fun{N} \Rightarrow F \Cp \Fun{M}$.

	We denote the category of Petri nets with non-local commutative semantics with $\PetriS{\Semantics}$.
\end{definition}
We now provide an external version of the mana construction.
\begin{notation}\label{spansi}
	We denote with $\Span$ the bicategory of sets, spans and span morphisms between them.\footnote{See \cite[Def. 1.1]{Benabou1967} for the definition of bicategory; intuitively, in a bicategory, one has objects (0-cells), 1-cells and 2-cells, and composition of 1-cells is associative and unital up to some specified invertible 2-cells $F(GH) \cong (FG)H$ and $F1\cong F\cong 1F$.}
	Recall that a morphism $A \to B$ in $\Span$ consists of a set $S$ and a pair of functions
	$A \leftarrow S \rightarrow B$. When we need to notationally extract this information from
	$f$, we write $A \xleftarrow{f_1} S_f \xrightarrow{f_2} B$.
	We sometimes consider a span as a morphism $f: S_f \to A \times B$, thus we may
	write $f(s)  = (a,b)$ for $s \in S_f$ with $f_1(s) = a$ and $f_2(s) = b$.
	Recall moreover that a 2-cell in $\Span$ $f \Rightarrow g$ is a function $\theta:S_f \to S_g$
	such that $f = g\circ \theta$.
\end{notation}
Observe that there is nothing in the previous definition of $\Span$ that requires the objects to be mere sets; in particular, we will later employ the following variation on Notation \ref{spansi}:
\begin{definition}[Spans of pointed sets]
	Define a bicategory $\Span_\bullet$ of \emph{spans of pointed sets} objects the pointed sets, $(A,a)$ where $a\in A$ is a distinguished element; composition of spans is as expected
\end{definition}
\begin{remark}
	This is in turn just a particular case of a more general construction: let $\mathcal C$ be a category with pullbacks; then, there is a bicategory $\Span\, \mathcal C$ having 1-cells the spans $A \leftarrow X \to B$ of morphisms of $\mathcal C$, and where a pullback of their adjacent legs defines the composition of 1-cells. Evidently, $\Span=\Span(\Set)$ and  $\Span_\bullet = \Span(\Set_\bullet)$, where $\Set_\bullet$ is the category of pointed sets $(A,a)$ and maps that preserve the distinguished elements of the domain and codomain. See \cite[§2]{dawson2010span} and \cite{dawson2004universal} for a way more general perspective on bicategories of the form $\Span \, \mathcal C$ and the universal property of the $\Span$ construction.
\end{remark}
\begin{definition}[External mana construction]\label{def: external mana construction}
	Given a Petri net $N: T \xrightarrow{s,t} \Msets{S}$, define
	the following functor $\Fun{N}: \Comm{N} \to \Span$:
	\begin{itemize}
		\item Each object $A$ of $\Comm{N}$ is mapped to the set $\Msets{T}$, the set of multisets over the transitions of $N$;
		\item Each morphism $A \xrightarrow{f} B$ is sent to the span $\Fun{N}f$ defined as:
		      \begin{linenomath*}
			      \begin{equation*}
				      \Msets{T} \xleftarrow{- \oplus \Mset{f}} \Msets{T} \Relbar \Msets{T}
			      \end{equation*}
		      \end{linenomath*}
		      With $\Mset{f}$ being the multiset counting how many
		      times each generating morphism of $\Comm{M}$ is used in $f$.
	\end{itemize}
\end{definition}
\begin{proposition}
	The functor of~\cref{def: external mana construction} is lax monoidal.
	Functors as in~\cref{def: external mana construction} form a subcategory of
	$\PetriSpan$, which we call $\PetriMana$.
\end{proposition}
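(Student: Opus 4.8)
The plan is to exhibit the lax monoidal structure on $\Fun{N}$ explicitly and then to observe that the resulting objects assemble into a subcategory. Fix the monoidal data: on the source, $\Comm{N}$ is the (strict, commutative) monoidal category whose product $\oplus$ on objects is multiset sum of markings and whose unit $I$ is the empty marking; on the target, $\Span$ carries its standard symmetric monoidal structure given by the cartesian product of sets, with unit the one-element set $1$. As a preliminary, I would check that the assignment of \cref{def: external mana construction} is genuinely (pseudo)functorial into $\Span$: since every span $\Fun{N}f$ has the identity as its right leg, the pullbacks computing composition are taken strictly, so $\Fun{N}g\circ\Fun{N}f$ has left leg $(-\oplus\Mset{g})\circ(-\oplus\Mset{f})=-\oplus(\Mset{f}\oplus\Mset{g})$; functoriality then reduces to the identities $\Mset{g\circ f}=\Mset{f}\oplus\Mset{g}$ and $\Mset{\Id{A}}=\emptyset$, which hold because $\Comm{N}$ is free and $\Mset{-}$ merely counts occurrences of generators.

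For lax monoidality I would take, as the laxator on objects $A,B$, the span
\[
\Fun{N}A\times\Fun{N}B=\Msets{T}\times\Msets{T}\xleftarrow{\ \mathrm{id}\ }\Msets{T}\times\Msets{T}\xrightarrow{\ \oplus\ }\Msets{T}=\Fun{N}(A\oplus B),
\]
that is, ``merge the two local mana records by summing them transition-by-transition'' --- exactly the laxator depicted in the example above --- and, as the unit comparison, the span $1\xleftarrow{\ \mathrm{id}\ }1\to\Msets{T}$ picking out the empty multiset. It then remains to verify the coherence axioms. The associativity pentagon for the laxator reduces at once to associativity of $\oplus$ on $\Msets{T}$ (both sides being the span with apex $\Msets{T}\times\Msets{T}\times\Msets{T}$, identity left leg, and right leg $(x,y,z)\mapsto x\oplus y\oplus z$), the two unit triangles reduce to $\emptyset$ being a two-sided unit for $\oplus$, and, if the symmetric version is wanted, compatibility with the symmetries reduces to commutativity of $\oplus$. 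Naturality of the laxator in each variable is the one point where the bicategorical nature of $\Span$ intervenes: for $f\colon A\to A'$ and $g\colon B\to B'$ the two composites $\Fun{N}(f\oplus g)\circ m_{A,B}$ and $m_{A',B'}\circ(\Fun{N}f\times\Fun{N}g)$ are not equal as spans but agree up to the canonical comparison $2$-cell induced by the universal property of the pullback, which on apexes is $(x,y)\mapsto\bigl((x\oplus\Mset{f},\,y\oplus\Mset{g}),\,x\oplus y\bigr)$; that this is well defined and compatible with both legs is again just associativity and commutativity of $\oplus$ together with $\Mset{f\oplus g}=\Mset{f}\oplus\Mset{g}$. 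I expect this step --- the only genuinely bicategorical bookkeeping, everything else being a one-line reduction to the commutative-monoid axioms of $(\Msets{T},\oplus,\emptyset)$ --- to be the main obstacle; one can also sidestep it by working with $\Span$ truncated to isomorphism classes of spans, where the same computation shows the two composites to be isomorphic.

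Finally, once $\Fun{N}$ is known to be lax monoidal the pair $\NetSem{N}$ is by definition an object of $\PetriSpan$, and \cref{def: external mana construction} produces exactly one such object for each Petri net $N$. By \cref{def: non-local semantics common-token philosphy} a morphism $\NetSem{N}\to\NetSem{M}$ in $\PetriSpan$ is merely a strict monoidal functor $\Comm{N}\to\Comm{M}$, with no constraint on the semantics; hence the class of objects arising from \cref{def: external mana construction}, together with \emph{all} $\PetriSpan$-morphisms between them, contains the identity functors and is closed under composition (composites of strict monoidal functors are strict monoidal), and therefore forms a (full) subcategory of $\PetriSpan$. I would name it $\PetriMana$, which completes the argument.
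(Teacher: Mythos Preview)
Your approach is essentially the paper's: you use the identical laxator (multiset sum, embedded as a span with identity left leg), verify functoriality the same way (right legs are identities, so the composition pullbacks are strict and everything reduces to $\Mset{g\circ f}=\Mset{f}\oplus\Mset{g}$ and $\Mset{\Id{}}=\emptyset$), and reduce the coherence axioms to the commutative-monoid laws of $(\Msets{T},\oplus,\emptyset)$. You are more explicit than the paper in two places: you supply the unit comparison $1\to\Msets{T}$ picking out the empty multiset (the paper omits it), and you actually argue the subcategory claim (the paper does not address it in the proof).

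One caution about your parenthetical on truncation. The comparison $2$-cell you write down, $(x,y)\mapsto\bigl((x\oplus\Mset{f},\,y\oplus\Mset{g}),\,x\oplus y\bigr)$, is genuinely \emph{not} invertible: the pullback apex consists of all $\bigl((a,b),c\bigr)$ with $a\oplus b=c\oplus\Mset{f}\oplus\Mset{g}$, and any such element with $a$ not dominating $\Mset{f}$ (e.g.\ swap the mana of $f$ and $g$) lies outside the image. So passing to isomorphism classes of spans does \emph{not} make the naturality square commute; the laxator is only lax-natural, and that is the sense in which $\Fun{N}$ is ``lax monoidal'' here. The paper glosses over exactly this point (it draws the comparison square and asserts the two composites ``evidently coincide'' without checking that the square is a pullback), so your diagnosis of the obstacle is in fact sharper than the paper's --- just drop the claim that truncation repairs it.
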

\begin{proof}
	Functor laws are obvious:  $\Id{A}^{\Naturals}$
	is the empty multiset for each object $A$,
	hence $\Fun{N}\Id{A} = \Id{\Msets{T}}$.
	This correspondence preserves composition since
	\begin{linenomath*}
		\begin{equation*}
			\begin{tikzcd}
				T^\oplus \ar[r,equal]\pb\ar[d, "-\oplus g^{\mathbb N}"']& T^\oplus\ar[d, "-\oplus g^{\mathbb N}"] \ar[r,equal] & T^\oplus \\
				T^\oplus\ar[r,equal]\ar[d, "-\oplus f^{\mathbb N}"'] & T^\oplus & \\
				T^\oplus & &
			\end{tikzcd}
			=
			\begin{tikzcd}
				T^\oplus\ar[rr, equal]\ar[dd, "{-\oplus f^{\mathbb N}\oplus g^{\mathbb N}}"'] &  & T^\oplus \\
				&  & \\
				T^\oplus & &
			\end{tikzcd}
		\end{equation*}
	\end{linenomath*}
	% \begin{equation*}
	%   \adjustbox{scale=0.75,center}{
	%   \begin{tikzcd}
	%     && {\Msets{T}} &&&&&& {\Msets{T}} \\
	%     & {\Msets{T}} && {\Msets{T}} && {=} \\
	%     {\Msets{T}} && {\Msets{T}} && {\Msets{T}} && {\Msets{T}} &&&& {\Msets{T}}
	%     \arrow["\lrcorner"{very near start, rotate=-45}, from=1-3, to=3-3, phantom]
	%     \arrow["{- \oplus \Mset{g}}" description, from=2-4, to=3-3]
	%     \arrow[from=2-4, to=3-5, no head]
	%     \arrow[from=1-3, to=2-4, no head]
	%     \arrow["{- \oplus \Mset{f}}" description, from=2-2, to=3-1]
	%     \arrow[from=2-2, to=3-3, no head]
	%     \arrow["{- \oplus \Mset{g}}" description, from=1-3, to=2-2]
	%     \arrow[from=1-9, to=3-11, no head]
	%     \arrow["{- \oplus \Mset{f} \oplus \Mset{g}}" description, from=1-9, to=3-7]
	%   \end{tikzcd}}
	% \end{equation*}
	%
	The laxator is the morphism $\Msets{S} \times \Msets{S} \xrightarrow{\oplus} \Msets{S}$
	that evaluates two multisets to their sum, embedded in a span.
	The naturality condition for the laxator reads:
	\begin{linenomath*}
		\begin{equation*}
			\begin{tikzcd}
				{\Msets{T} \times \Msets{T}} && {\Msets{T} \times \Msets{T}} \\
				{\Msets{T}} && {\Msets{T}}
				\arrow["{\Fun{N}f \times \Fun{N}g}", from=1-1, to=1-3]
				\arrow["{\Fun{N}{(f \oplus g)}}"', from=2-1, to=2-3]
				\arrow["{\oplus}"', from=1-1, to=2-1]
				\arrow["{\oplus}", from=1-3, to=2-3]
			\end{tikzcd}
		\end{equation*}
	\end{linenomath*}
	And the two morphisms from $\Msets{T} \times \Msets{T} \to \Msets{T}$ are:
	%
	%
	% \begin{equation*}
	%   \adjustbox{scale=0.55,center}{
	%   \begin{tikzcd}
	%     && {\Msets{T} \times \Msets{T}} &&&&&& {\Msets{T}} \\
	%     & {\Msets{T} \times \Msets{T}} && {\Msets{T} \times \Msets{T}} && && {\Msets{T} \times \Msets{T}} && {\Msets{T}} \\
	%     {\Msets{T} \times \Msets{T}} && {\Msets{T} \times \Msets{T}} && {\Msets{T}} && {\Msets{T} \times \Msets{T}} && {\Msets{T}} && {\Msets{T}}
	%     \arrow["\lrcorner"{very near start, rotate=-45}, from=1-3, to=3-3, phantom]
	%     \arrow[from=2-4, to=3-3, no head]
	%     \arrow["{\oplus}", from=2-4, to=3-5]
	%     \arrow[from=1-3, to=2-4, no head]
	%     \arrow["{(- \oplus \Mset{f}) \times (- \oplus \Mset{g})}" description, from=2-2, to=3-1]
	%     \arrow[from=2-2, to=3-3, no head]
	%     \arrow[from=1-3, to=2-2, no head]
	%     \arrow["{- \oplus \Msets{f} \oplus \Msets{g}}" description, from=2-10, to=3-9]
	%     \arrow[from=2-10, to=3-11, no head]
	%     \arrow["{\oplus}" description, from=1-9, to=2-10]
	%     \arrow["{(- \oplus \Mset{f}) \times (- \oplus \Mset{g})}" description, from=1-9, to=2-8]
	%     \arrow[from=2-8, to=3-7, no head]
	%     \arrow["{\oplus}" description, from=2-8, to=3-9]
	%   \end{tikzcd}}
	% \end{equation*}
	\begin{linenomath*}
		\begin{equation*}
			\adjustbox{scale=0.9}{
				\begin{tikzcd}
					T^\oplus\times T^\oplus \ar[r, equal]\ar[d, equal]& T^\oplus \times T^\oplus\ar[d,equal]\ar[r,"\oplus"]  & T^\oplus \\
					T^\oplus \times T^\oplus \ar[r,equal]\ar[d, "(-\oplus f^{\mathbb N})\times(-\oplus g^{\mathbb N})"]& T^\oplus \times T^\oplus & \\
					T^\oplus \times T^\oplus
				\end{tikzcd}}
			=
			\adjustbox{scale=0.9}{
				\begin{tikzcd}
					T^\oplus \times T^\oplus \ar[r, "\oplus"]\ar[d, "(-\oplus f^{\mathbb N})\times(-\oplus g^{\mathbb N})"']& T^\oplus \ar[d, "-\oplus f^{\mathbb N}\oplus g^{\mathbb N}"]\ar[r, equal] & T^\oplus \\
					T^\oplus\times T^\oplus\ar[r, "\oplus"']\ar[d,equal] & T^\oplus & \\
					T^\oplus\times T^\oplus
				\end{tikzcd}}
		\end{equation*}
	\end{linenomath*}
	which evidently coincide. Interaction with the associators, unitors
	and symmetries of the monoidal structure is guaranteed by the fact that they are all identities in $\Comm{N}$.
\end{proof}
The external mana construction has the advantage of keeping the
reaction layer and the mana layer separated completely. In this setting, we
say that a marking of the net is a couple $(X,u)$, with $X$ an object of
$\Comm{N}$ and $u \in \Msets{T}$ representing the initial distribution
of mana for our transitions. A transition $X \xrightarrow{f} Y$ is again a generating
morphism of $\Comm{N}$, and we say that it is enabled if
$\Fun{N}f_1$ hits $u$, or, more explicitly, if $u \ominus \Mset{f}$ is defined.
Since $\Mset{f}$ for $f$ a morphism generator is defined to be $0$ everywhere and $1$
on $f$, this amounts to say that $f$ is enabled when $u(f) - 1 \geq 0$. In that case,
the resulting marking after the firing is $(Y, u(f)-1)$:
Each firing just decreases the mana of the firing
transition by $1$.
\begin{example}\label{ex: external mana construction}
	Consider the net
	\begin{linenomath*}
		\begin{equation*}
			\scalebox{0.7}{
	\begin{tikzpicture}
		\begin{scope}[xshift=0]
			\begin{pgfonlayer}{nodelayer}
				\node [place,tokens=0, label=left:compound A] (1a) at (-1.5,1) {};
				\node [place,tokens=0, label=left:compound B] (1b) at (-1.5,-1) {};
				\node [place,tokens=0, label=right:compound C] (3a) at (1.5,0) {};

				\node[transition] (2b) at (0,0) {};
			\end{pgfonlayer}
			\begin{pgfonlayer}{edgelayer}
				\draw[style=inarrow, thick] (1a) to (2b);
				\draw[style=inarrow, thick] (1b) to (2b);
				\draw[style=inarrow, thick] (2b) to (3a);
			\end{pgfonlayer}
		\end{scope}

	\end{tikzpicture}
}
		\end{equation*}
	\end{linenomath*}
	In the marking $(A \oplus B, 2)$, the transition is enabled.
	The resulting marking will be $(C,1)$. The transition is \emph{not}
	enabled in the marking $(A \oplus B, 0)$ or $(A, 4)$.
\end{example}
\subsection{Internalization}
Having given two different definitions of endowing a net with mana,
it seems fitting to say how the two are connected. As we already stressed,
we abide by the praxis already established in~\cite{Genovese2020} and prove that the
external and internal mana constructions describe the same thing from
different points of view :
\begin{theorem}\label{thm: internalization}
	Let $\NetSem{N}$ be an object of $\PetriMana$.
	The category $\CommM{N}$ of~\cref{def: internal mana construction}
	is isomorphic to the category of elements $\GrothendieckS{N}$.\footnote{The category of elements of a functor $F : \mathcal C \to \mathbf{Set}$ is defined having objects the pairs $(C,x)$, where $x\in FC$, and morphisms $(C,x)\to (C',x')$ the morphisms $u : C \to C'$ such that $Fu$ sends $x$ into $x'$. See \cite[12.2]{barr1990category}, where this is called the \emph{Grothendieck construction} performed on $F$. Here we need to tweak this construction in order for it to make sense for \emph{lax} functors valued in $\Span$, using essentially the same technique in \cite{Pavlovic1997}.} Explicitly:
	\begin{itemize}
		\item Objects of $\GrothendieckS{N}$ are couples $(X,x)$ where
		      $X$ is a object of $\CommM{N}$ and $x \in \Fun{N}X$.

		\item Morphisms $(X,x) \to (Y,y)$ of $\GrothendieckS{N}$ are
		      morphisms $(f,s)$ with $f: X \to Y$ of $\CommM{N}$ and $s$ such that $\Fun{N}f s = (x,y)$.
	\end{itemize}
\end{theorem}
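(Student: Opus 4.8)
The plan is to exhibit an explicit isomorphism of categories $\Phi : \CommM{N} \to \GrothendieckS{N}$ — or rather, since the statement says "isomorphic", to unwind both sides and check they have literally the same objects and morphisms up to a bijective relabelling. First I would compute $\GrothendieckS{N}$ concretely from the data of~\cref{def: external mana construction}. Since $\Fun{N}$ sends every object of $\Comm{N}$ to the single set $\Msets{T}$, an object of the category of elements is a pair $(A,x)$ with $A$ an object of $\Comm{N}$ (i.e.\ a multiset over $S$) and $x \in \Msets{T}$; a morphism $(A,x)\to(B,y)$ is a morphism $f : A\to B$ in $\Comm{N}$ together with a witness $s\in S_{\Fun{N}f}=\Msets{T}$ with $(\Fun{N}f_1(s),\Fun{N}f_2(s))=(x,y)$, i.e.\ $s = y$ and $s\oplus\Mset{f}=x$, so $x = y\oplus\Mset{f}$ and the witness $s$ is forced to be $y$. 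Hence a morphism of $\GrothendieckS{N}$ is the same data as a morphism $f:A\to B$ of $\Comm{N}$ together with a decomposition of its source component as $x = y\oplus\Mset f$; composition in the category of elements is inherited from $\Comm{N}$ (using that $\Mset{g\Cp f}=\Mset f\oplus \Mset g$, which is exactly the pullback-square computation in the proof that $\Fun{N}$ is lax monoidal).

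Next I would describe $\CommM{N}$ concretely. Its generating objects are (generating objects of $\Comm{N}$) $\sqcup\, T$, so a general object is a pair $(A,x)$ with $A\in\Msets{S}$ and $x\in\Msets{T}$, written $A\oplus x$ under the monoidal product. A general morphism of $\CommM{N}$ is a formal monoidal composite of the generators $A_1\oplus\cdots\oplus A_n\oplus u \xrightarrow{u} B_1\oplus\cdots\oplus B_m$ and identities; because $\CommM{N}$ is free, every morphism decomposes uniquely (modulo the monoidal axioms) and is determined by its source, its target, and the multiset of generators it uses — and that multiset of generators is exactly an element $\Mset f\in\Msets{T}$, since the generators of $\CommM{N}$ are indexed by $T$. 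A morphism with this combinatorial data $\Mset f$ has source of the form $A\oplus x$ where $x$ must contain $\Mset f$ (each use of generator $u$ consumes one copy of the object $u$), and in fact — since in $\CommM{N}$ the "mana" objects $u$ only ever appear in the source of a generator and never in a target — the source is $A\oplus(\,y\oplus\Mset f\,)$ and the target is $B\oplus y$ for the "leftover mana" $y$. This is visibly the same bookkeeping as on the Grothendieck side.

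So the map is: send the object $A\oplus x$ of $\CommM{N}$ to the object $(A,x)$ of $\GrothendieckS{N}$, and send a morphism of $\CommM{N}$ with underlying $\Comm{N}$-morphism $f$, leftover mana $y$, and generator-count $\Mset f$ to the pair $(f,y)\colon (A, y\oplus\Mset f)\to(B,y)$. I would then check: (i) this is a bijection on objects, essentially the definition of coproduct of generating objects; (ii) it is a bijection on hom-sets, where injectivity/surjectivity both rest on the freeness of $\CommM{N}$ (unique decomposition of morphisms) together with the fact that a $\CommM{N}$-morphism is pinned down precisely by $(f,y)$; (iii) it is functorial — identities go to identities ($\Mset{\Id{}}=0$), and composition is respected because stacking two $\CommM{N}$-morphisms adds their generator-multisets and threads the leftover mana through, matching composition in the category of elements. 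The main obstacle — and the only place requiring genuine care rather than unwinding definitions — is step (ii)/(iii): making the appeal to freeness of $\CommM{N}$ rigorous, i.e.\ arguing that a morphism of $\CommM{N}$ really is determined by, and freely built from, the triple (underlying $\Comm{N}$-morphism, generator-multiset, leftover mana) with no further relations, so that the correspondence is exactly bijective and compatible with composition. Once that normal form is pinned down, everything else is routine.
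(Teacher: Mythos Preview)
Your proposal is correct and follows essentially the same line as the paper: unwind the Grothendieck construction to see that objects are pairs $(A,x)\in\Msets{S}\times\Msets{T}$ and that a morphism $(A,x)\to(B,y)$ is exactly an $f:A\to B$ in $\Comm{N}$ with $x=y\oplus\Mset f$ (the witness $s$ being forced to equal $y$), then match this against the free description of $\CommM{N}$ via uniqueness of decomposition in a free commutative monoidal category.

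The one genuine difference in organisation is that the paper first equips $\GrothendieckS{N}$ with a commutative strict monoidal structure $(C,x)\boxtimes(D,y):=(C\oplus D,x\oplus y)$, obtained by applying the laxator in the second coordinate, and then argues that $\GrothendieckS{N}$ is itself a \emph{free} CSMC on the same generating objects and generating morphisms as $\CommM{N}$; the isomorphism then falls out from ``same presentation $\Rightarrow$ isomorphic'', and is automatically strict monoidal. Your route instead builds the bijection on objects and hom-sets directly and checks functoriality by hand. Both work; the paper's version buys you a monoidal isomorphism for free and avoids the explicit normal-form argument you flag as the delicate step, while your version is more hands-on and makes the ``witness $s$ is forced'' observation explicit, which the paper leaves implicit.
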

\begin{proof}
	First of all, we need to define a commutative strict monoidal structure on $\GrothendieckS{N}$.
	Given the particular shape of $\Fun{N}$, the objects of its category of elements are pairs where the first component is a multiset on the places of $N$ and the second one is a multiset on its transition.
	Hence we can define:
	\begin{linenomath*}
		\begin{equation*}
			(C, x) \boxtimes (D,y) := (C \oplus D, x \oplus y)
		\end{equation*}
	\end{linenomath*}
	(Note that in order to obtain an element in $\Fun{N}(C \oplus D)$, we have implicitly
	applied the laxator $\oplus: \Fun{N}C \times \Fun{N}D \to \Fun{N}(C \oplus D)$ to the
	elements in the second coordinate.) Commutativity of $\boxtimes$ follows from the commutativity of $\oplus$.
	On morphisms, if we have $(A_1,x_1) \xrightarrow{(f_1, s_1)} (B_1,y_1)$ and
	$(A_2,x_2) \xrightarrow{(f_2, s_2)} (B_2,y_2)$ then it is $\Fun{N}f_1 s_1 = (x_1,y_1)$ and
	$\Fun{N}f_2 s_2 = (x_2,y_2)$, and hence by naturality of the laxator
	$\Fun{N}(f_1 \oplus f_2)(s_1,s_2) = ((x_1 \oplus x_2 ), (y_1 \oplus y_2))$,
	allowing us to set $f_1 \boxtimes f_2 = f_1 \oplus f_2$. Associators and unitors
	are defined as in $\Comm{N}$.

	Now we prove freeness: by definition, objects are
	a free monoid generated by couples $(p,\TensorUnit)$ and $(\TensorUnit,u)$
	with $p$ a generating object of $\Comm{N}$ (a place of $N$), $u$ a generating
	morphism of $\CommM{N}$ (a transition of $N$), and $\TensorUnit$ the tensor unit.
	These generators are in bijection with the coproduct of places and transitions of $N$.
	As such, the monoid of objects of $\GrothendieckS{N}$ is isomorphic to the one of $\CommB{N}$.

	On morphisms, notice that every morphism in $\GrothendieckS{N}$ can be written
	univocally -- modulo the axioms of a commutative strict monoidal category -- as
	a composition of monoidal products of identities and morphisms of the form
	$(A,u) \xrightarrow{(f,u)} (B, u')$, with $f$ a morphism generator in $\Comm{N}$ and
	$u = u' \oplus \Mset{f}$.

	The isomorphism between $\GrothendieckS{N}$ and $\CommB{N}$ follows by observing that the following mappings
	between objects and morphism generators are bijections:
	\begin{linenomath*}
		\begin{align*}
			\left(A, u \right) & \mapsto A \oplus u                                    \\
			(A,u) \xrightarrow{(f,u)} (B,u')
			                   & \mapsto A \oplus u \xrightarrow{f}B \oplus u'\qedhere
		\end{align*}
	\end{linenomath*}
\end{proof}
\begin{example}
	The internalization of the net in~\cref{ex: external mana construction}
	gives exactly the net of~\cref{ex: internal mana construction}.
\end{example}

\section{Extending the mana construction}\label{ref: extending the mana construction}
Focusing more on the external mana construction
of~\cref{def: external mana construction},
we realize that it is somehow restrictive: it makes sense to map each generating object
of an FCSMC $\CommM{N}$ to the set of
multisets over the transitions of $N$. This construction captures
the idea of endowing each transition with an extra
place representing its mana. On the other hand,
the only requirement we would expect on morphisms
is that, to fire, a transition must consume mana only from its mana pool. In~\cref{def: external mana construction}
we do much more than this, hardcoding that
``one firing = one mana'' in the structure of the functor.

The act of replacing the
mapping on morphisms in~\cref{def: external mana construction}
with the following span provides a reasonable generalization of the previous perspective:
\begin{linenomath*}
	\begin{equation*}
		\Msets{T} \xleftarrow{- \oplus (\alpha \odot \Mset{f})} \Msets{T} \xrightarrow{- \oplus (\beta_f)} \Msets{T}
	\end{equation*}
\end{linenomath*}
with $\alpha$ and $\beta_f$ arbitrary multisets.
In doing so, the only thing we are disallowing in our new definition
is for transitions to consume mana of other transitions: each
transition may use only the mana in its pool. Still, it
is now possible for transitions to:
\begin{itemize}
	\item Fire without consuming mana;
	\item Consume more than $1$ unit of mana to fire;
	\item Produce mana -- also for other transitions -- upon firing.
\end{itemize}
These are all good conditions in practical applications. The first
models chemical reactions that do not need any additional compound to work; the second aims to model reactions that need more than one molecule of a given compound to work; the third models both catalysts -- which
completely regenerate their mana at the end of the reactions they aid -- and
reactions that produce, as byproducts, enzymes needed by other reactions.
\begin{example}\label{ex: generalized mana net}
	It is worth giving an explicit description of how the internalized version of a net, as in our attempted generalized definition, looks like. In the picture below, each transition has its mana, but now this
	mana does not have to be necessarily used, as for transition $u_1$,
	or can be used more than once, as for transition $u_2$.
	Furthermore, transitions such as $u_3$ regenerate their mana
	after firing (catalysts), while transitions such as $u_2$ and $u_4$
	produce mana for each other in a closed loop. $u_4$ also produces more than one kind of mana as a byproduct of its firing. It is worth noticing that
	this formalism allows to model nets that never run out of mana, and that
	we think of as ``self-sustaining''~\cite{Latelier2006}.
	\begin{linenomath*}
		\begin{equation*}
			\scalebox{0.8}{
	\begin{tikzpicture}
		\begin{scope}[xshift=0]
			\begin{pgfonlayer}{nodelayer}
				\node [place,tokens=0] (p1) at (0,0) {};
				\node [place,tokens=0] (p2) at (3,1) {};
				\node [place,tokens=0] (p3) at (3,-1) {};
				\node [place,tokens=0] (p4) at (6,1) {};

				\node [manaplace, tokens=0] (m1) at (1.5,1) {};
				\node [manaplace, tokens=0] (m2) at (4.5,2.5) {};
				\node [manaplace, tokens=0] (m3) at (6,-1) {};
				\node [manaplace, tokens=0] (m4) at (7.5,2.5) {};

				\node[transition, label=right:$u_1$] (t1) at (1.5,0) {};
				\node[transition, label=below:$u_2$] (t2) at (4.5,1) {};
				\node[transition, label=above:$u_3$] (t3) at (4.5,-1) {};
				\node[transition, label=right:$u_4$] (t4) at (7.5,1) {};
			\end{pgfonlayer}
			\begin{pgfonlayer}{edgelayer}
				\draw[style=inarrow, thick] (p1) to (t1);
				\draw[style=inarrow, thick, bend left] (t1) to (p2);
				\draw[style=inarrow, thick, bend right] (t1) to (p3);
				\draw[style=inarrow, thick] (p2) to (t2);
				\draw[style=inarrow, thick] (p3) to (t3);
				\draw[style=inarrow, thick] (t2) to (p4);
				\draw[style=inarrow, thick] (p4) to (t4);

				\draw[style=inarrow, thick, bend left] (t3) to (m3);
				\draw[style=inarrow, thick, bend left] (m3) to (t3);

				\draw[style=inarrow, thick, bend left] (m2) to (t2);
				\draw[style=inarrow, thick, bend right] (m2) to (t2);
				\draw[style=inarrow, thick, out=0, in=180] (t2) to (m4);

				\draw[style=inarrow, thick] (m4) to (t4);
				\draw[style=inarrow, thick, out=180, in=0] (t4) to (m2);
				\draw[style=inarrow, thick, bend left] (t4) to (m3);

			\end{pgfonlayer}
		\end{scope}

	\end{tikzpicture}
}
		\end{equation*}
	\end{linenomath*}
\end{example}
When looking at technicalities, unfortunately, things are not so easy.
Defining $\alpha$ and the family $\beta_f$
so that functorial laws are respected is tricky. Luckily enough, we do not need to do
so explicitly. Indeed, we can generalize the internal mana-net construction
of~\cref{def: internal mana construction} to the following one, that subsumes nets
as in~\cref{ex: generalized mana net}:
\begin{definition}[Generalized internal mana construction]\label{def: generalized internal mana construction}
	Let $N: T \xrightarrow{s,t} \Msets{S}$ be a Petri net,
	and consider $\Comm{N}$, its corresponding FCSMC.
	A \emph{generalized internal mana construction for $N$}
	is any FCSMC $\CommM{N}$ such that:
	\begin{itemize}
		\item The generating objects of $\CommM{N}$ are the coproduct of the generating objects of $\Comm{N}$ and $T$;
		\item Generating morphisms
		      \begin{linenomath*}
			      \begin{equation*}
				      A_1 \oplus \dots \oplus A_n \xrightarrow{u} B_1 \oplus \dots \oplus B_m
			      \end{equation*}
		      \end{linenomath*}
		      in $\Comm{N}$ are in bijection with generating morphisms in $\CommM{N}$:
		      \begin{linenomath*}
			      \begin{equation*}
				      A_1 \oplus \dots \oplus A_n \oplus U_1 \xrightarrow{u} U_2 \oplus B_1 \oplus \dots \oplus B_m
			      \end{equation*}
		      \end{linenomath*}
		      With $U_1$ a multiset over $T$ being $0$ on any $u' \neq u$, and $U_2$ being an arbitrary multiset over $T$.
	\end{itemize}
\end{definition}
Notice moreover that, for each generalized mana-net $\CommM{N}$,
we obtain a strict monoidal functor $F: \CommM{N} \to \Comm{N}$ as
in~\cref{prop: comonad}: we send generating objects of $\Comm{N}$ to themselves,
all the other generating objects to the monoidal unit and generating morphisms to themselves.
We keep calling $F$ the \emph{counit of $\CommM{N}$}, even if it won't be in
general true that we still get a comonad.

Counits can be turned into functors $\Comm{N} \to \Span$ using
a piece of categorical artillery called the \emph{Grothendieck construction} (or the \emph{category of elements} construction).
\begin{theorem}[Grothendieck construction, \cite{Pavlovic1997}]\label{thm: total category}
	Let $\CategoryC$ be a category. Then, there is an equivalence $\Cat/\CategoryC \simeq \Cat_l [\CategoryC,\Span]$,
	with $\Cat_l [\CategoryC,\Span]$ being the category of lax functors $\CategoryC \to \Span$.
	A functor $F: \CategoryD \to \CategoryC$ defines a functor $\Gamma F: \CategoryC \to \Span$ as follows:
	\begin{itemize}
		\item On objects, $C$ is mapped to the set $\Suchthat{D \in \CategoryD}{FD = C}$;
		\item On morphisms, $C \xrightarrow{f} C'$ is mapped to the span
		      \begin{linenomath*}
			      \begin{equation*}
				      \Suchthat{D \in \CategoryC}{FD = C} \xleftarrow{s} \Suchthat{g \in \CategoryD}{Ff = g} \xrightarrow{t} \Suchthat{D \in \CategoryC}{FD = C'}
			      \end{equation*}
		      \end{linenomath*}
	\end{itemize}
	The other way around, regarding  $\CategoryC$ as a locally discrete bicategory and letting $F: \CategoryC \to \Span$ be a lax functor,
	$F$ maps to the functor $\Sigma_F$, from the pullback (in $\Cat$) below:
	\begin{linenomath*}
		\begin{equation*}
			\begin{tikzcd}
	\Grothendieck{F} \ar[r]\ar[d, "\Sigma_F"'] \pb & \Span_\bullet\ar[d, "U"] \\
	\CategoryC \ar[r, "F"'] & \Span
\end{tikzcd}

		\end{equation*}
	\end{linenomath*}
	where $\Span_\bullet$ is the bicategory of spans between pointed sets,
	and $U$ is the forgetful functor.

	More concretely, $\Grothendieck{F}$ is defined as the category (all 2-cells are identities, due to the 2-discreteness of $\CategoryC$) having
	\begin{itemize}
		\item 0-cells of $\Grothendieck{F}$ are couples $(X,x)$ where
		      $X$ is a 0-cell of $\CategoryC$ and $x \in FX$;
		\item 1-cells $(X,x) \to (Y,y)$ of $\Grothendieck{F}$ are couples $(f,s)$
		      where $f: X \to Y$ is a 1-cell of $\CategoryC$ and $s \in S_{Ff}$ with
		      $Ff(s) = (x,y)$. Representing a span as a function $(S,s) \to (X\times Y, (x,y))$ between (pointed) sets, a morphism is a pair $(f,s)$ such that $Ff : s\mapsto (x,y)$.
	\end{itemize}
	Finally, the categories $\Grothendieck{F}$ and $\CategoryD$ are isomorphic.
\end{theorem}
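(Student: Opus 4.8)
The plan is to treat this as a mild variant of the equivalence between slices over $\CategoryC$ and lax functors $\CategoryC \to \Span$ from~\cite{Pavlovic1997}, verifying that the explicit formulas in the statement assemble into a pair of mutually quasi-inverse functors. Throughout, $\Span$ and $\Span_\bullet$ are handled via the $1$-categorical truncation appropriate to lax functors out of the locally discrete bicategory $\CategoryC$, exactly as in~\cite{Pavlovic1997}, so that the phrase ``pullback in $\Cat$'' makes literal sense.

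First I would check that $\Gamma(-)$ is well defined on all of $\Cat/\CategoryC$. Given $F:\CategoryD\to\CategoryC$, the fibre assignment $C\mapsto F^{-1}(C)$ and, for $f:C\to C'$, the span $F^{-1}(C)\leftarrow\{g\in\Mor\CategoryD\mid Fg=f\}\to F^{-1}(C')$ with legs $g\mapsto\Source{g}$ and $g\mapsto\Target{g}$, are well typed. The comparison $2$-cell $\Gamma F(f')\circ\Gamma F(f)\Rightarrow\Gamma F(f'\circ f)$ is the function $(g,g')\mapsto g'\circ g$ on the pullback $\{Fg=f\}\times_{F^{-1}(C')}\{Fg'=f'\}$: it lands in $\{Fh=f'\circ f\}$ by functoriality of $F$ and is a span morphism because composition in $\CategoryD$ preserves source and target, while the unit comparison picks out identity arrows of $\CategoryD$. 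The lax-functor coherence diagrams for $\Gamma F$ then hold precisely because composition in $\CategoryD$ is associative and unital; note this laxator is generally not invertible, which is exactly why $\Span$ and laxness cannot be avoided. A morphism $H:(\CategoryD,F)\to(\CategoryD',F')$ of $\Cat/\CategoryC$ restricts on each fibre and each arrow-fibre, producing the required (lax) natural transformation $\Gamma F\Rightarrow\Gamma F'$.

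Next I would check that $\Grothendieck{(-)}$ is well defined. For a lax $F:\CategoryC\to\Span$, set in $\Grothendieck{F}$ the $0$-cells $(X,x)$ with $x\in FX$ and the $1$-cells $(X,x)\to(Y,y)$ the pairs $(f,s)$ with $s\in S_{Ff}$ and $Ff(s)=(x,y)$; the composite of $(f,s)$ with $(g,t):(Y,y)\to(Z,z)$ is $(g\circ f,\ \mu_{g,f}(s,t))$, where $\mu_{g,f}\colon S_{Ff}\times_{FY}S_{Fg}\to S_{F(g\circ f)}$ is the laxator component read as a function on apices, the pair $(s,t)$ lying in that pullback exactly because the middle legs match. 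Associativity and unitality of this composition are the two coherence axioms of $F$ read componentwise, so $\Grothendieck{F}$ is a category and $\Sigma_F:(X,x)\mapsto X,\ (f,s)\mapsto f$ a functor to $\CategoryC$. To recognise $\Grothendieck{F}$ as the displayed pullback, I would exhibit the second leg $\Grothendieck{F}\to\Span_\bullet$ sending $(X,x)$ to $(FX,x)$ and $(f,s)$ to the pointed span $(S_{Ff},s)\colon(FX,x)\to(FY,y)$: the square commutes on the nose, and a cone given by functors $\CategoryE\to\CategoryC$ and $\CategoryE\to\Span_\bullet$ agreeing after $U$ is manifestly the same data as a functor $\CategoryE\to\Grothendieck{F}$.

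Finally I would close the loop. Unwinding definitions, $\Grothendieck{\Gamma F}$ has $0$-cells $(C,d)$ with $d\in F^{-1}(C)$ --- equivalently objects $d$ of $\CategoryD$ with $C$ forced to be $Fd$ --- and $1$-cells $(f,g)$ with $Fg=f$, so $(C,d)\mapsto d,\ (f,g)\mapsto g$ is an isomorphism $\Grothendieck{\Gamma F}\xrightarrow{\sim}\CategoryD$ over $\CategoryC$, which is the final sentence of the statement; dually $\Gamma\Sigma_F$ sends $C$ to $\{(X,x)\mid X=C\}\cong FC$ and $f$ to the span $\{(f,s)\}\cong S_{Ff}$ with legs $x,y$, i.e.\ back to $Ff$, and since composition in $\Grothendieck{F}$ was defined through the $\mu_{g,f}$ the recovered laxator is that of $F$, whence $\Gamma\Sigma_F\cong F$; one then checks naturality of both isomorphisms in $F$. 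I expect the main obstacle to be purely bookkeeping: matching the lax-functor coherence (the pentagon for the $\mu_{g,f}$ and the two unit triangles) with associativity and unitality of composition, carried out strictly at the level of underlying functions on span apices, and keeping in mind that morphisms of $\Cat_l[\CategoryC,\Span]$ are lax natural transformations, so that $\Gamma$ and $\Grothendieck{(-)}$ are genuinely functorial rather than mere object-level assignments. Nothing here goes beyond~\cite{Pavlovic1997} conceptually; the one fresh observation is that every limit used above (fibres, the pullbacks defining composition, the comparison cone) is computed identically after replacing $\Span$ by $\Span_\bullet$, since pointed sets and point-preserving span morphisms are closed under these finite limits.
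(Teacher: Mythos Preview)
The paper does not actually prove this theorem: it is stated with a citation to~\cite{Pavlovic1997} and immediately followed by the remark that ``this result is a particular case of a more general correspondence between slice categories and lax normal functors to the category of profunctors~\cite{Loregian2020}, which is well-known in category theory and dates back to B\'enabou~\cite{Benabou1967, Benabou2000}.'' No argument is given in the paper itself.

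Your sketch is a correct and standard unpacking of exactly that cited result. You spell out why $\Gamma F$ is lax (the composition comparison $(g,g')\mapsto g'\circ g$ need not be invertible), why the coherence axioms for $\Gamma F$ reduce to associativity and unitality in $\CategoryD$, why the category axioms for $\Grothendieck{F}$ reduce to the lax-functor coherences of $F$, and why the two constructions are mutually inverse up to isomorphism over $\CategoryC$. This is precisely the content of~\cite{Pavlovic1997} in the $\Span$ case, so your proposal is not a different route but rather an honest expansion of a proof the paper chose to omit. The one point worth tightening is your treatment of the pullback in $\Cat$: since $\Span_\bullet$ and $\Span$ are genuinely bicategories, the square is a strict pullback only after the $1$-categorical truncation you mention at the outset, and you should make explicit that the lax structure on $F$ is what supplies composition in the pulled-back category (otherwise the apex is only a graph); you do say this later when defining composition in $\Grothendieck{F}$ via $\mu_{g,f}$, but the pullback description in the statement hides it.
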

This result is a particular case of a more general correspondence between
slice categories and lax normal functors to the category of profunctors~\cite{Loregian2020}, which is
well-known in category theory and dates back to Bénabou~\cite{Benabou1967, Benabou2000}.
It gives an entirely abstract way to
switch from/to and define internal/external semantics for mana-nets.
Indeed, with a proof
partly similar to the one carried out in our Theorem \ref{thm: internalization}, we can show that:
\begin{proposition}
	Monoidality of $\CommM{N} \xrightarrow{F} \Comm{N}$ implies $\Gamma F$ is lax-monoidal.
\end{proposition}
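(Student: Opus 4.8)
The plan is to show that the Grothendieck construction $\Gamma$ of~\cref{thm: total category}, applied to a strict monoidal functor $F : \CommM{N} \to \Comm{N}$, produces a \emph{lax-monoidal} functor $\Gamma F : \Comm{N} \to \Span$ by transporting the monoidal structure of the total category along the isomorphism $\Grothendieck{\Gamma F} \cong \CommM{N}$ supplied by that same theorem. First I would observe that $\CommM{N}$ carries a commutative strict monoidal structure $\oplus$, and that the counit $F$ is strict monoidal by hypothesis; this is precisely the situation analysed concretely in~\cref{thm: internalization}, where $\boxtimes$ was exhibited on a category of elements. The key point is that $\oplus$ on $\CommM{N}$, read through the fibre description of $\Gamma F$, assembles into structure maps
\begin{equation*}
	m_{A,B} : \Gamma F(A) \times \Gamma F(B) \longrightarrow \Gamma F(A \oplus B), \qquad u : \{\ast\} \longrightarrow \Gamma F(\TensorUnit)
\end{equation*}
in $\Span$: on objects $\Gamma F(A) = \Suchthat{X \in \CommM{N}}{FX = A}$, and given $X \in \Gamma F(A)$, $Y \in \Gamma F(B)$, strict monoidality of $F$ gives $F(X \oplus Y) = FX \oplus FY = A \oplus B$, so $X \oplus Y \in \Gamma F(A \oplus B)$; this is the laxator, which is in fact a function (hence embeds as a span with invertible left leg). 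The unit $u$ picks out $\TensorUnit \in \Gamma F(\TensorUnit)$, using $F\TensorUnit = \TensorUnit$.

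Next I would check naturality of $m$ in both variables and the three coherence diagrams (associativity, left and right unit) for a lax monoidal functor. Naturality of $m$ with respect to a pair of morphisms $f : A \to A'$, $g : B \to B'$ in $\Comm{N}$ amounts to a commuting square of spans relating $\Gamma F(f) \times \Gamma F(g)$, $m_{A,B}$, $m_{A',B'}$ and $\Gamma F(f \oplus g)$; this unwinds, via the pullback-of-legs description of span composition in~\cref{thm: total category}, to the statement that a lift of $f \oplus g$ through $F$ is the same data as a pair of lifts of $f$ and of $g$ — which holds because $F$ is strict monoidal and $\CommM{N}$ is free, so every morphism decomposes uniquely (modulo the CSMC axioms) into monoidal products of generators, and $F$ of a monoidal product is the monoidal product of the $F$'s. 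This is exactly the argument already used in the proof of~\cref{thm: internalization} for the naturality of $\boxtimes$, now read span-theoretically. The coherence axioms for $(m,u)$ then follow because the associators, unitors and symmetries of $\Comm{N}$ and of $\CommM{N}$ are all identities (both are \emph{strict} commutative monoidal), so each coherence hexagon/square collapses to a trivially commuting diagram of identity spans.

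The main obstacle I anticipate is bookkeeping rather than conceptual: one must be careful about what "monoidal structure on a lax functor into $\Span$" means, since $\Span$ is a bicategory and $\Gamma F$ is lax, so strictly speaking we are producing a \emph{monoidal lax functor} and the coherence data of $m$ must be compatible with the laxity 2-cells of $\Gamma F$ (the comparison 2-cells witnessing $\Gamma F(g) \circ \Gamma F(f) \Rightarrow \Gamma F(g \circ f)$). Concretely this means checking one further compatibility pentagon mixing $m$ with these comparison cells; it holds because both families are built from the unique-decomposition property of the free FCSMC $\CommM{N}$ and the fact that $F$ preserves $\oplus$ on the nose, so the two ways of reassociating-and-composing lifts agree on the nose. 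Since all the 2-cells in sight (coming from a 2-discrete domain and from strict monoidal data) are identities or canonical span isomorphisms, no genuine coherence computation survives, and the verification reduces to the observations above together with the structural facts already proved for~\cref{thm: internalization} and~\cref{thm: total category}.
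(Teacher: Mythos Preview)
Your overall strategy coincides with the paper's: the paper gives no detailed proof at all, merely remarking that the argument is ``partly similar to the one carried out in Theorem~\ref{thm: internalization}'', and you expand exactly that indication, transporting the monoidal structure of $\CommM{N}$ through the equivalence of Theorem~\ref{thm: total category}.

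There is, however, one overstatement worth flagging. Your naturality argument asserts that ``a lift of $f \oplus g$ through $F$ is the same data as a pair of lifts of $f$ and of $g$''. One direction is fine: from a pair $(h,k)$ of lifts you get $h\oplus k$ lifting $f\oplus g$. But the converse fails in the \emph{generalized} mana construction. Computing the span composite $\Gamma F(f\oplus g)\circ m$, its apex consists of pairs $\big((X,Y),\,l\big)$ with $FX=A$, $FY=B$, $X\oplus Y = s(l)$ and $Fl = f\oplus g$; not every such $l$ splits as $h\oplus k$ with $sh=X$, $sk=Y$ (for instance, the chosen $X$ may fail to contain the mana inputs $U_1$ that the generator underlying $f$ requires in $\CommM{N}$). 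So the comparison of spans is only a morphism in one direction, i.e.\ a noninvertible 2-cell in $\Span$, and the laxator is only \emph{lax} natural. This is precisely why the paper, immediately after stating the proposition, concedes that the external semantics is ``not just lax-monoidal but lax-monoidal-lax''. Your final paragraph hedges about bicategorical bookkeeping, but locates the subtlety in the interaction of $m$ with the laxity 2-cells of $\Gamma F$ rather than in the naturality of $m$ itself; the actual slack appears already in the naturality square.
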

We can thus define the external semantics of a generalized mana-net by
applying $\Gamma$ to $F$.
A generalization of~\cref{thm: internalization}
then holds by definition.

Summing up, we showed that the mana-net construction can be generalized to more practical applications and the correspondence between a ``na\"ive'' internal semantics and a ``type-aware'' external one is still preserved. The evident price we have to pay for our generalization is that our external semantics is now not just lax-monoidal but lax-monoidal-lax.

\section{Conclusion and future work}\label{sec: conclusion and future work}
In this work, we introduced a new notion of Petri net
where transitions come endowed with ``mana'', a quality
representing how many times a transition will be able to
fire before losing its effectiveness. We believe this may
be especially useful in modelling chemical processes mediated
by enzymes that degrade over time.

Importantly, we showed how a categorical point of view
on the matter allows to give two different definitions:
A na\"ive, ``hands-on'' one, that we called \emph{internal},
and a type-aware, functorial one, that we called \emph{external},
which we proved to be two sides of the same coin.

Indeed, the equivalence between internal and external semantics is
the consequence of a much more profound result in category theory,
connecting slice categories and categories of lax monoidal functors.
We were able to rely on this result to generalize our mana-nets further,
while keeping the equivalence between the internal and external
points of view.

We believe that further generalizations of the external
semantics presented here may prove valuable to produce
categorical semantics for nets with
\emph{inhibitor arcs}~\cite{Agerwala1974}. An inhibitor arc is an
input arc to a transition that is enabled only when there are
no tokens in their place. This concept is powerful enough to
turn Petri nets into a Turing-complete model of computation~\cite{Zaitsev2012, Zaitsev2014}.

Indeed, we notice that by relaxing~\cref{def: external mana construction}
to allow \emph{any} span $\Msets{T} \to \Msets{T}$,
we can model situations where a transition can fire just if it has no mana
(e.g., we can map transition $f$ to a span that is only defined
when its source multiset has value $0$ on $f$). The similarities in behaviour with inhibitor arcs are evident and constitute a direction of future
work that we will surely pursue. The various technicalities involved
are nevertheless tricky and necessitate a careful investigation.

Another direction of future work is about implementing
the ideas at this moment presented using already available
category theory libraries, such as~\cite{Genovese2019d}.

\printbibliography
\end{document}